\numberwithin{equation}{section}
\theoremstyle{plain}
\newtheorem{theorem}[equation]{Theorem}
\newtheorem{lemma}[equation]{Lemma}
\def\ZZ{\mathbb{Z}}
\def\QQ{\mathbb{Q}}
\def\CC{\mathbb{C}}
\def\HH{\mathcal{H}}
\def\Ocal{\mathcal{O}}
\title{Parity of 4-regular and 8-regular partition functions}
\author{Giacomo Cherubini}
\author{Pietro Mercuri}
\address{
         Charles University,
         Faculty of Mathematics and Physics,
         Department of Algebra,
         Sokolov\-sk\'a 83, 18600 Praha~8,
         Czech Republic
        }
\address{
      Istituto Nazionale di Alta Matematica ``Francesco Severi'',
      Research Unit Dipartimento di Matematica ``Guido Castelnuovo'',
      Sapienza Universit\`a di Roma, Piazzale Aldo Moro 5, I-00185, Roma
      }
\email{
    cherubini@karlin.mff.cuni.cz\\
    cherubini@altamatematica.it
    }
\address{
        Sapienza  Universit\`{a} di Roma, Department of SBAI, Rome, Italy%, ORCiD: 0000-0003-4402-6432
        }
\email{
    mercuri.ptr@gmail.com
    }
\date{\today}
\subjclass[2020]{11P83, 11F20}
\keywords{congruences, regular partitions, Dedekind eta function, modular forms}
\begin{document}

%% ABSTRACT %%
\begin{abstract}
We give a complete characterization of the parity of $b_8(n)$, the number of $8$-regular partitions of $n$. Namely, we prove that $b_8(n)$ is odd or even depending on whether or not we have the factorisation $24n+7=p^{4a+1}m^2$, for some prime $p\nmid m$ and $a\ge 0$.
\end{abstract}

\maketitle

%% INTRODUCTION %%
\section{Introduction}

Partition functions are very natural and elementary objects,
being defined as the number of ways we can decompose a non-negative integer as the sum of positive integers, possibly with some constraints. In this paper we focus on $\ell$-regular partitions, i.e., decompositions where there is no summand divisible by $\ell$. The total number of $\ell$-regular partitions of an integer~$n$ is denoted by $b_\ell(n)$.

What is less trivial about partitions is that their generating function
has connections to the rather advanced theory of modular forms
and in particular to the Dedekind $\eta$ function (see \Cref{sec:modform}).
In \Cref{rem:prevref} we show that, when $\ell$ is a prime power, say $\ell=p^j$, the generating function of $b_{p^j}(n)$
can be essentially identified with $\eta^r$ modulo $p$,
for some positive even integer $r$ such that $\eta^r$
is \emph{lacunary}, which means that the set of its non-vanishing Fourier coefficients has density zero.
In particular, the $p$-divisibility of $b_{p^j}(n)$ can be studied by looking at the coefficients of $\eta^r$.

The connection between lacunary powers of $\eta$ and $b_{p^j}(n)$
requires $p^j-1$ to divide the exponent $r$. Since Serre in \cite{Ser85} showed that $\eta^r$ is lacunary if and only if $r\in\{2,4,6,8,10,14,26\}$, this implies that $p^j \in \{2,3,4,5,7,8,9,11,27\}$.
It turns out that all cases appeared in the literature: \cite{OP00,LP01,DP09,Abi22}, except for $p^j=8$. We prove therefore the following.

%% THEOREM b_8 %%
\begin{theorem}\label{thm:b8}
Let $n$ be a positive integer and let $b_8(n)$ denote
the number of $8$-regular partitions of $n$.
Then $b_8(n)$ is odd if and only if
\begin{equation}\label{2911:eq001}
24n+7 = p^{4a+1}m^2,
\end{equation}
for some prime $p\nmid m$ and some $a\ge 0$.
\end{theorem}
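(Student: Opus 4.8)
The plan is to reduce the arithmetic statement about $b_8(n)$ to a statement about the coefficients of a lacunary power of $\eta$, as the introduction advertises, and then to identify that power with an explicit modular form whose coefficients are governed by a Hecke eigenform. First I would write down the generating function $\sum_{n\ge 0} b_8(n)q^n = \prod_{n\ge 1}(1-q^{8n})/(1-q^n)$, which in $\eta$-notation is (up to a fractional power of $q$) $\eta(8z)/\eta(z)$. Reducing modulo $2$ and using the fact that $(1-q^{8n})\equiv(1-q^n)^8\pmod 2$ in the sense that $\prod(1-q^{8n})\equiv\prod(1-q^n)^8$, I expect the product to collapse so that $\sum b_8(n)q^n\equiv\prod(1-q^n)^7\pmod 2$, i.e. the generating function is congruent to $\eta^7$ up to the appropriate $q$-power shift. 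Since $r=7$ is odd, the relevant lacunary even power is $\eta^{14}$: squaring and working modulo $2$ (where squaring is the Frobenius) should turn the parity of $b_8(n)$ into a question about the coefficients of $\eta(z)^{14}$ at arithmetic progressions, with the shift $24n+7$ emerging from completing the $q$-exponent (the $7/24$ coming from $14\cdot\frac{1}{24}$).

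The key step is then to recognize $\eta(z)^{14}$, or a twist of it, as (congruent to) a Hecke eigenform of weight $7$ and to read off its Fourier coefficients. Here I would invoke the classical fact that $\eta^{14}$ is related to a CM newform of weight $7$ on $\Gamma_0(N)$ with a known character, whose $L$-function has an Euler product; concretely one expects $\eta(z)^{14}$ to be, modulo $2$, an eigenform attached to a Hecke Grossencharacter of the Gaussian or Eisenstein field, so that the $n$-th coefficient is nonzero modulo $2$ exactly when $n$ is, up to squares, a prime raised to an even power plus one — precisely the shape $p^{4a+1}m^2$. I would make this explicit by computing the Euler factors: at a good prime $p$ the coefficient $a(p^k)$ satisfies the Hecke recursion $a(p^{k+1})=a(p)a(p^k)-p^{6}a(p^{k-1})$, and modulo $2$ the $p^6$ term vanishes, so $a(p^k)\equiv a(p)^k$; combined with multiplicativity this forces $a(N)\bmod 2$ to depend only on the exponent pattern of $N$ in a way that yields the odd/even dichotomy.

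Concretely, after the reduction I would argue that $b_8(n)$ is odd if and only if the coefficient of $q^{24n+7}$ in $\eta^{14}$ is odd, and then that this coefficient is odd exactly when $24n+7$ has the form $p^{4a+1}m^2$. The forward direction (if $24n+7=p^{4a+1}m^2$ then the coefficient is odd) follows from multiplicativity together with the $p$-adic computation $a(p^{4a+1})\equiv a(p)\not\equiv 0$, using that $24n+7\equiv 7\pmod{24}$ guarantees the relevant prime $p$ is odd and that $a(p)$ is odd for the primes that can occur. The converse requires showing that any $N\equiv 7\pmod{24}$ whose coefficient is odd must factor in this way, which comes down to analyzing the CM structure: the nonvanishing modulo $2$ singles out $N$ represented by a fixed quadratic form, and the square-class and exponent constraints then give the exponent $4a+1$. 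I expect the main obstacle to be making the identification of $\eta^{14}$ with a specific CM eigenform fully rigorous modulo $2$ — in particular tracking the Nebentypus, the level, and the precise set of \emph{bad} primes, and verifying that the congruence $a(p)\equiv$ (something nonzero) $\pmod 2$ holds uniformly for all primes $p\equiv 7\pmod{24}$ (or whatever residue class is forced), rather than merely for finitely many checked cases. Handling the ramified and exceptional primes, and confirming that the shift by $7$ lines up exactly with the theta/CM decomposition, is where the delicate bookkeeping lies.
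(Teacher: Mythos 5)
Your first step is exactly the paper's: via $(1-q^m)^{14}\equiv(1-q^{8m})^2/(1-q^m)^2\pmod 2$ one gets $\eta(12z)^{14}\equiv\sum_n b_8(n)q^{24n+7}\pmod 2$, so the parity of $b_8(n)$ is read off from the coefficient of $q^{24n+7}$ in $\eta^{14}$. But the second half of your plan has two genuine gaps. First, $\eta^{14}(12z)$ is \emph{not} a single CM eigenform, nor is it congruent to one modulo $2$ in any directly usable way: by Serre's decomposition it equals $\frac{1}{720\sqrt{-3}}\bigl(\varphi_{K,c_+}-\varphi_{K,c_-}\bigr)$, a difference of two conjugate eigenforms attached to Hecke characters of $\QQ(\sqrt{-3})$, divided by $720\sqrt{-3}$. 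Since $2^4\,\|\,720$, the statement ``$b_8(n)$ is even'' translates into $\bigl(a_+(24n+7)-a_-(24n+7)\bigr)/\sqrt{-3}\equiv 0\pmod{2^5}$, i.e.\ the entire analysis must be carried out modulo $2^4$ and $2^5$, not modulo $2$. Second, your key simplification of the Hecke recursion is false: every prime dividing $24n+7$ is odd, so $\chi(p)p^{6}$ is odd and the term $\chi(p)p^{6}a(p^{k-1})$ does \emph{not} vanish modulo $2$; the correct mod-$2$ recursion is $a(p^{k+1})\equiv a(p)a(p^k)+a(p^{k-1})$. Concretely, for an inert prime one has $a(p)=0$ yet $a(p^2)=-\chi(p)p^6$ is odd, contradicting your claimed $a(p^k)\equiv a(p)^k$.

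These two gaps are not bookkeeping details to be deferred; they are where the precise shape of the theorem comes from. A purely mod-$2$ analysis of a single multiplicative sequence can at best separate even from odd exponents, so your framework would yield the criterion ``$24n+7=p^{\text{odd}}m^2$,'' which is false: when the exponent is $\equiv 3\pmod 4$ the partition number $b_8(n)$ is even. The distinction between exponents $4a+1$ and $4a+3$ arises only from the finer congruences that the $720\sqrt{-3}$ denominator forces: for $p\equiv 7\pmod{12}$ and odd $j$ one has $a_+(p^j)=-a_-(p^j)=t\sqrt{-3}$ with $t\equiv 2^3\pmod{2^4}$ if $j\equiv 1\pmod 4$ but $t\equiv 0\pmod{2^4}$ if $j\equiv 3\pmod 4$, and it is the product of this factor with the overall factor $2$ coming from $a_+-a_-$ (together with the parity of the remaining Euler factors) that either reaches or fails the threshold $2^5$. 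To repair your argument you would need to abandon the single-eigenform-mod-$2$ picture, compute $a_\pm(p^j)$ modulo $2^4$ in the three residue classes $p\equiv 1$, $5{,}11$, and $7\pmod{12}$ separately, and then assemble the difference by multiplicativity --- which is precisely the paper's proof.
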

Note that, if \Cref{2911:eq001} holds, then we must have $p\equiv 7\pmod{24}$.
Curiously, the same factorisation condition appears in \cite[Theorem 1.4 (1)]{BL22} to characterise the behaviour modulo~$4$ of certain overpartitions with additional constraints, although we do not see a direct connection with $8$-regular partitions; also, our proof uses different techniques 
from the one in \cite{BL22}.

We point out that previous works where $p^j$-regular partitions have been studied by means of lacunary powers of the type $\eta^r$ always assumed that $r$ equals $p^j-1$. Relaxing this condition to the divisibility of the former by the latter, as explained in \Cref{rem:prevref}, allows us to obtain the missing case $p^j=8$.

In \Cref{sec:b4} we also show how the connection with lacunary powers of $\eta$ can be used to prove a characterization of the parity for $b_4(n)$, which is not contained in \cite{OP00,LP01,DP09,Abi22} but appeared before in the literature (see e.g.,~\cite[Table 1, Entry 2]{BBG87}) with different proofs. We include ours for the sake of completeness.

%% THEOREM b_4 %%
\begin{theorem}\label{thm:b4}
Let $b_4(n)$ denote the number of $4$-regular partitions of $n\in\ZZ_{\ge 0}$.
Then $b_4(n)$ is even if and only if $8n+1$ is not a square.
\end{theorem}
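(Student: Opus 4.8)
The plan is to connect $b_4(n)$ to a lacunary power of $\eta$ modulo $2$, exactly as the introduction advertises through the discussion in the (unseen) \Cref{rem:prevref}. Here $\ell=4=2^2$, so $p=2$, $j=2$, and the relevant exponent $r$ must be a multiple of $p^j-1=3$ lying in Serre's list $\{2,4,6,8,10,14,26\}$; the natural choice is $r=6$. First I would recall the product formula for the generating function of $b_4(n)$, namely $\sum_{n\ge 0} b_4(n)q^n = \prod_{k\ge 1}\frac{1-q^{4k}}{1-q^k} = \frac{\eta(4\tau)}{\eta(\tau)}$ up to the standard $q^{1/24}$ fudge factors. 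The key arithmetic input is that modulo $2$ one has $1-q^{4k}\equiv (1-q^k)^4$, so that $\prod(1-q^{4k})/(1-q^k)\equiv \prod(1-q^k)^3 \pmod 2$, which identifies the generating function with $\eta^3$-type data modulo $2$; squaring or tracking the $q$-power bookkeeping carefully turns this into a congruence involving $\eta(8\tau)^3$ or, equivalently after accounting for the prefactor $q^{1/24}$ and the substitution aligning exponents, a form whose coefficients are governed by $\eta^6$.

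Second, I would invoke the explicit evaluation of the lacunary power. The classical identity of Jacobi gives $\eta(\tau)^3 = \sum_{m\ge 0}(-1)^m(2m+1)q^{(2m+1)^2/8}$, so the coefficients of $\eta^3$ are supported precisely on exponents that are squares (of odd numbers) after clearing the $1/8$, and modulo $2$ the sign and the factor $2m+1$ drop to give coefficient $1$ exactly on those square exponents. Thus modulo $2$ the $n$-th coefficient of the relevant series is nonzero if and only if $8n+1$ is a perfect square — the odd square $(2m+1)^2$ corresponding to the index. Matching the arithmetic progression coming from the $q^{1/24}$ normalisation (which shifts $n$ to $8n+1$) against this support set is the crux: $b_4(n)$ is odd precisely when $8n+1$ is a square, equivalently even precisely when $8n+1$ is not a square, which is the claim.

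The main obstacle, as usual with these $\eta$-quotient arguments, is the bookkeeping of fractional $q$-powers and the precise modular normalisation: one must check that the congruence $\prod(1-q^{4k})/(1-q^k)\equiv \eta\text{-power}\pmod 2$ lands the exponents in the correct arithmetic progression so that the support condition reads exactly $8n+1=\square$ rather than some shifted or scaled variant. Concretely, I expect to write $\sum b_4(n)q^{24n+?}$ against $\eta(8\tau)^3$ and verify the linear change of variables on exponents is $24n + c \mapsto$ the square, then reduce $24n+c$ to the clean statement $8n+1$. I would handle this by pinning down the exact weight and level data for $\eta(4\tau)/\eta(\tau)$ and comparing term by term with Jacobi's expansion; once the exponent matching is secured, the equivalence ``$b_4(n)$ odd $\iff 8n+1$ a square'' follows immediately, and the theorem is just its contrapositive.
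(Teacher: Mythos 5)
Your proposal is correct, and it proves the theorem by a genuinely different (and more elementary) route than the paper. Your core argument is: modulo $2$ one has $(1-q^k)^4\equiv 1-q^{4k}$, hence
\[
\sum_{n\ge 0}b_4(n)q^n=\prod_{k\ge1}\frac{1-q^{4k}}{1-q^k}\equiv\prod_{k\ge1}(1-q^k)^3\pmod 2,
\]
and Jacobi's identity $\prod_{k\ge1}(1-q^k)^3=\sum_{m\ge0}(-1)^m(2m+1)q^{m(m+1)/2}$ shows that the right-hand side has odd coefficients exactly at the triangular numbers, i.e.\ at those $n$ with $8n+1=(2m+1)^2$; since $8n+1$ is odd, a square value of $8n+1$ is automatically an odd square, so $b_4(n)$ is odd if and only if $8n+1$ is a square. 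Note that the exponent bookkeeping you were worried about disappears in this formulation: you never need to square the series, nor to pass through $\eta^6$ or an expansion $\sum b_4(n)q^{24n+?}$ (those remarks in your first and third paragraphs are red herrings); if you want an eta-level statement, it is simply $\sum b_4(n)q^{8n+1}\equiv\eta(8z)^3\pmod 2$, supported on the odd squares. The paper argues differently: using its Lemma~\ref{rem:prevref} with $(r,s)=(6,2)$ it obtains $\sum b_4(n)q^{8n+1}\equiv\eta(4z)^6\pmod 2$, recognises $\eta(4z)^6$ as Serre's CM eigenform attached to a Hecke character on $\QQ(i)$, computes the parity of its coefficients on prime powers (split versus inert primes in $\ZZ[i]$), and concludes by multiplicativity that $a(8n+1)$ is odd if and only if every exponent in the factorisation of $8n+1$ is even, i.e.\ if and only if $8n+1$ is a square. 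Your route buys brevity and self-containedness --- Jacobi's identity replaces Hecke characters, eigenform theory and multiplicativity of Fourier coefficients; the paper's route buys uniformity --- it is exactly the CM-form machinery that is indispensable for the $8$-regular case (Theorem~\ref{thm:b8}), where no classical theta-type identity for the relevant power of $\eta$ is available, which is presumably why the authors ran the simpler case through the same pipeline.
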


The paper is organised as follows:
in \Cref{sec:modform} we recall some well-known results about modular forms and describe the main tool in our argument (\Cref{rem:prevref}). In \Cref{sec:b4} we characterise the parity of $b_4(n)$;
then in \Cref{sec:b8} we treat $b_8(n)$.

\subsection*{Acknowledgements}
G.C.~received support by
the Czech Science Foundation GACR, grant 21-00420M,
the project PRIMUS/20/SCI/002 from Charles University,
and the Charles University Research Centre program UNCE/SCI/022.
This work began during a visit of P.M.~to Charles University,
which we thank for the support and the hospitality.
G.C.~is a Researcher at INdAM.

%% SECTION 2 %%
\section{Preliminaries}\label{sec:modform}

We begin by explaining how to connect regular partition
functions to modular forms
and in particular to powers of the Dedekind eta function
\[
\eta(z) = q^{1/24}\prod_{m=1}^{\infty} (1-q^m),
\]
where $q=e^{2\pi iz}$ and $z\in\HH=\{z\in\CC:\mathrm{Im}(z)>0\}$.
It is well known that $q^{1/24}\eta(z)^{-1}$ gives the generating function
of classical partitions. As for regular partitions, we use instead
the following observation.

%% LEMMA: CONGRUENCE TO DEDEKIND ETA (mod p) %%
\begin{lemma}\label{rem:prevref}
Let $p$ be a prime and let $r$ and $s$ be positive integers
such that $r=(p^j-1)s$ for some $j\geq 1$. Then
\[
\left(\sum_{n=0}^{\infty} b_{p^j}(n)q^{n+(p^j-1)/24}\right)^s
\equiv
\eta(z)^{r} \pmod{p}.
\]
\end{lemma}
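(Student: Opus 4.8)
The plan is to reduce the claimed congruence to an elementary identity in the power-series ring $\mathbb{F}_p[[q]]$. First I would recall the product formula for the generating function of $p^j$-regular partitions: since a partition is $p^j$-regular exactly when none of its parts is divisible by $p^j$, the Euler-product manipulation $\prod_{p^j\nmid m}(1-q^m)^{-1}=\prod_{m\ge1}(1-q^m)^{-1}\prod_{m\ge1}(1-q^{p^j m})$ gives
\[
\sum_{n=0}^{\infty} b_{p^j}(n)q^n = \prod_{m=1}^{\infty}\frac{1-q^{p^j m}}{1-q^m} =: F(q),
\]
an honest power series with integer coefficients and constant term $1$. Writing $\eta(z)=q^{1/24}\prod_{m\ge 1}(1-q^m)$, both sides of the statement carry a common factor $q^{r/24}$ with $r=(p^j-1)s$: the left-hand side equals $q^{r/24}F(q)^s$, while $\eta(z)^r=q^{r/24}\prod_{m\ge1}(1-q^m)^r$. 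Cancelling this factor, the lemma becomes the congruence of genuine integer power series
\[
F(q)^s\equiv \prod_{m=1}^{\infty}(1-q^m)^r\pmod p.
\]

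The engine of the proof is the mod $p$ Frobenius identity $(1-q^m)^{p^j}\equiv 1-q^{p^j m}\pmod p$, obtained by iterating $(1-x)^p\equiv 1-x^p\pmod p$ a total of $j$ times. Applying it termwise to the numerator of $F(q)$, I would compute
\[
F(q)^s=\frac{\prod_{m\ge1}(1-q^{p^j m})^s}{\prod_{m\ge1}(1-q^m)^s}\equiv \frac{\prod_{m\ge1}(1-q^m)^{p^j s}}{\prod_{m\ge1}(1-q^m)^s}=\prod_{m=1}^{\infty}(1-q^m)^{(p^j-1)s}\pmod p,
\]
and since $(p^j-1)s=r$ this is exactly the desired right-hand side. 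The division is legitimate because $\prod_{m\ge1}(1-q^m)^s$ has constant term $1$ and is therefore a unit in $\mathbb{F}_p[[q]]$.

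The points requiring care are bookkeeping rather than genuine obstacles, and the main one is simply keeping the fractional $q$-powers honest: one should note explicitly that the factor $q^{r/24}$ occurs identically on both sides and can be stripped off \emph{before} reducing modulo $p$, so that the congruence is really a statement about power series in integer powers of $q$, where reduction mod $p$ and the unit-division above are unambiguous. Equivalently, one can phrase the whole argument through the single identity $\eta(p^j z)\equiv\eta(z)^{p^j}\pmod p$ and then raise to the power $s$, but the power-series formulation sidesteps any delicacy about congruences between series in $q^{1/24}$. With the product formula for $F(q)$ and this remark in place, the computation above completes the proof.
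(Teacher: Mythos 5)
Your proof is correct and follows essentially the same route as the paper's: both rest on the Euler-product formula $\sum_n b_{p^j}(n)q^n=\prod_m (1-q^{p^jm})/(1-q^m)$ together with the Frobenius congruence $(1-q^m)^{p^js}\equiv(1-q^{p^jm})^s\pmod p$, the paper phrasing this as $(1-q^m)^r=(1-q^m)^{r+s}/(1-q^m)^s$ with $r+s=p^js$. Your added remarks on stripping the common factor $q^{r/24}$ and on unit-division in $\mathbb{F}_p[[q]]$ are careful bookkeeping on top of the same argument.
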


\begin{proof}
First, using that $r+s=p^js$, we have the congruence
\[
(1-q^m)^r
= \frac{(1-q^m)^{r+s}}{(1-q^m)^s}
\equiv \frac{(1-q^{p^jm})^s}{(1-q^m)^s} \pmod{p}.
\]
If we multiply the term on the right over $m\geq 1$, we obtain the $s$-th power
of the generating function of $p^j$-regular partitions. Indeed, we have
\[
\prod_{m=1}^{\infty}\frac{1-q^{p^jm}}{1-q^m}
=
\sum_{n=0}^{\infty} b_{p^j}(n)q^n.
\]
Therefore, combining the two equations above, we deduce that
\[
\eta(z)^r = q^{r/24}\prod_{m=1}^{\infty}(1-q^m)^r
\equiv
\left(\sum_{n=0}^{\infty} b_{p^j}(n)q^{n+\frac{r}{24s}}\right)^s\pmod{p}.
\]
Using again $r=(p^j-1)s$ to simplify the exponent of $q$ we obtain the lemma.
\end{proof}

In particular, $(r,s)$ equal to $(6,2)$ and $(14,2)$ correspond to $p^j=4$ and $8$, respectively.
\Cref{rem:prevref} implies then
\begin{equation}\label{1411:eq001}
\eta(z)^r\equiv \left(\sum_{n=0}^{\infty} b_{2^j}(n)q^{n+(2^j-1)/24}\right)^2\equiv \sum_{n=0}^{\infty} b_{2^j}(n)q^{2n+(2^j-1)/12} \pmod{2}.
\end{equation}
As mentioned in the introduction, when $r\in\{2,4,6,8,10,14,26\}$,
we have finitely many possibilities for $p^j$,
all of which have been studied except for $p^j=4,8$.

The limitation on the values of $r$ comes from the fact that $\eta^r$
can be written as a linear combination of 
modular forms with complex multiplication (CM),
associated to Hecke characters on $\QQ(\sqrt{-1})$ or $\QQ(\sqrt{-3})$,
in these favourable cases only
(for the definition of modular forms with CM and their relations with Hecke characters,
see \cite[Section~3]{Rib77}).
More precisely, $\eta^r$ is a scalar multiple of a cusp form with CM
when $r\in\{2,4,6,8\}$,
the powers given by $r\in\{10,14\}$ are linear combinations of two forms
and $r=26$ requires four forms (see \cite[Sections~2.1--2.7]{Ser85}).

We end this section by recalling properties of the Fourier coefficients of an eigenform. In combination with \Cref{1411:eq001}, these are used in the next two sections to determine the parity of $b_4$ and $b_8$.
Let $N$ and $k$ be positive integers.
We denote by $\mathcal S_k(\Gamma_1(N),\chi)$ the $\CC$-vector space
of cusp forms of weight $k$ invariant under the action of $\Gamma_1(N)$
on which $\Gamma_0(N)$ acts via the Dirichlet character $\chi$ of modulo $N$.
For more details see e.g.~\cite[Section~4.3, p.~119]{DS05}.
Let $f$ be a normalised eigenform of $\mathcal S_k(\Gamma_1(N),\chi)$ with $q$-expansion
\[
f(z) = \sum_{n=1}^{\infty} a(n)q^n,
\]
where $q=e^{2\pi iz}$ and $z\in\HH$. The Fourier coefficients $a(n)$ are multiplicative, i.e.,
\begin{equation}\label{eq:mult}
a(nm)=a(n)a(m),
\end{equation}
whenever $\gcd(n,m)=1$ and on prime powers we have the recursion
\[
a(p^j) = a(p)a(p^{j-1})-\chi(p)p^{k-1}a(p^{j-2}),
\]
for $j\geq 2$ (see e.g.~\cite[Proposition~5.8.5]{DS05}).
By induction on $j$, we can write the above in closed form in terms of $a(p)$.
Thus, for every prime $p$ and $j\geq 0$, we have
\begin{equation}\label{0810:tpbeta}
a(p^j) = \sum_{r=0}^{\lfloor j/2\rfloor} (-1)^r \binom{j-r}{r} \chi(p)^r p^{(k-1)r} a(p)^{j-2r}.
\end{equation}

%% SECTION 3: b_4 %%
\section{Parity of $4$-regular partition function}\label{sec:b4}

We now focus on the proof of \Cref{thm:b4}.
By \cite[Section~2.3]{Ser85}, we have
\[
\eta^6(4z) = \varphi_{K,c}(z),
\]
where $\varphi_{K,c}\in\mathcal S_3(\Gamma_1(16),\chi)$,
with the Dirichlet character $\chi$ given by
\[
\chi(n)=\begin{cases}
(-1)^{(n-1)/2}, & \text{if $n$ is odd},\\
0, & \text{if $n$ is even}.
\end{cases}
\]
The form $\varphi_{K,c}$ is the normalised eigenform
associated to the Hecke character $c$ on $K=\QQ(i)$ defined as follows:
if $\mathfrak{a}$ is an ideal in $\mathcal{O}_K=\ZZ[i]$, with generator $\alpha$
such that $\alpha\equiv 1\pmod{2\mathcal{O}_K}$, we set
\[
c(\mathfrak{a}) = \alpha^2.
\]
With this notation, the cusp form $\varphi_{K,c}$ can be written as
\begin{equation}\label{eq:heckechar4}
\varphi_{K,c}(z) = \sum_{\mathfrak{a}} c(\mathfrak{a}) q^{\mathrm{Norm}(\mathfrak{a})},
\end{equation}
where the sum runs over non-zero ideals in $\Ocal_K$.
Note that $\varphi_{K,c}$ is listed with label 16.3.c.a on \cite{lmfdb}.

Next, by \Cref{1411:eq001}, we have
\[
\eta(4z)^6 \equiv \sum_{n=0}^{+\infty} b_4(n) q^{8n+1} \pmod{2}.
\]
It follows that
\begin{equation}\label{eq:b4an}
b_4(n) \equiv a(8n+1) \pmod 2,
\end{equation}
where $a(n)$ is the $n$-th Fourier coefficient of $\varphi_{K,c}(z)$.
By multiplicativity of the Fourier coefficients,
see \Cref{eq:mult}, it suffices to study $a(n)$ on prime powers.

%% LEMMA: p = 1 (4) %%
\begin{lemma}\label{1411:lemma2}
If $p$ is a prime such that $p\equiv 1\pmod{4}$, then
\[
a(p^j)\equiv
\begin{cases}
0 \pmod{2}, &\text{if $j$ is odd},\\
1 \pmod{2}, &\text{if $j$ is even}.
\end{cases}
\]
\end{lemma}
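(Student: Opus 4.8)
The plan is to work out $a(p^j)\pmod 2$ directly from the closed form \Cref{0810:tpbeta}, exploiting the fact that $p\equiv 1\pmod 4$. First I would record the relevant data for the eigenform $\varphi_{K,c}$: here $k=3$, so $p^{k-1}=p^2$, and $\chi(p)=(-1)^{(p-1)/2}=1$ precisely because $p\equiv 1\pmod 4$. Substituting these into \Cref{0810:tpbeta} gives
\[
a(p^j) = \sum_{r=0}^{\lfloor j/2\rfloor} (-1)^r \binom{j-r}{r} p^{2r} a(p)^{j-2r}.
\]
Reducing modulo $2$, every power of $p^2$ and every sign collapses: since $p$ is odd we have $p^{2r}\equiv 1\pmod 2$, and $(-1)^r\equiv 1\pmod 2$. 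Thus the whole congruence reduces to a statement about $a(p)$ and binomial coefficients, namely
\[
a(p^j) \equiv \sum_{r=0}^{\lfloor j/2\rfloor} \binom{j-r}{r} a(p)^{j-2r} \pmod 2.
\]

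Next I would pin down $a(p)\pmod 2$. Using the ideal-theoretic description \Cref{eq:heckechar4}, the coefficient $a(p)$ is a sum of $c(\mathfrak a)=\alpha^2$ over ideals of norm $p$. Since $p\equiv 1\pmod 4$ splits in $\ZZ[i]$ as $p=\pi\bar\pi$, there are exactly two such ideals, with normalised generators $\alpha$ and $\bar\alpha$ satisfying $\alpha\equiv 1\pmod{2\Ocal_K}$; hence $a(p)=\alpha^2+\bar\alpha^2$. Writing $\alpha=x+yi$ with the congruence condition forcing $x$ odd and $y$ even, one computes $\alpha^2+\bar\alpha^2=2(x^2-y^2)$, which is manifestly even. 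Therefore $a(p)\equiv 0\pmod 2$, and the only surviving terms in the sum above are those with $j-2r=0$, i.e.\ $r=j/2$, which exists exactly when $j$ is even. This immediately yields $a(p^j)\equiv 0$ for $j$ odd and $a(p^j)\equiv \binom{j/2}{j/2}=1$ for $j$ even, as claimed.

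The main obstacle I anticipate is the verification that $a(p)$ is even, since this is where the genuine arithmetic of $\QQ(i)$ enters rather than formal manipulation of the recursion. One must be careful to (i) invoke that $p\equiv 1\pmod 4$ splits, so that there are exactly two ideals of norm $p$ contributing to the Fourier coefficient, and (ii) handle the normalisation $\alpha\equiv 1\pmod{2\Ocal_K}$ correctly, so that the parities of the real and imaginary parts of $\alpha$ are fixed. Given these, the evenness of $\alpha^2+\bar\alpha^2$ is a short computation. Everything downstream — the collapse of \Cref{0810:tpbeta} modulo $2$ and the extraction of the single nonzero binomial coefficient — is routine once $a(p)\equiv 0\pmod 2$ is in hand. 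An alternative to the explicit generator computation would be to note that complex conjugation acts on the pair of ideals of norm $p$, so $a(p)=c(\mathfrak a)+\overline{c(\mathfrak a)}=2\,\mathrm{Re}\,(\alpha^2)$ is automatically an even rational integer; I would likely present this conjugation argument as the cleanest route to $a(p)\equiv 0\pmod 2$.
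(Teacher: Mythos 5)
Your proposal is correct and follows essentially the same route as the paper's proof: compute $a(p)=(x+iy)^2+(x-iy)^2=2(x^2-y^2)\equiv 0\pmod 2$ from the splitting of $p$ in $\ZZ[i]$ via \Cref{eq:heckechar4}, then observe that in the closed form \Cref{0810:tpbeta} every term containing a positive power of $a(p)$ vanishes modulo $2$, leaving only the $r=j/2$ term (present exactly when $j$ is even), which is odd. Your closing remark that $a(p)=2\,\mathrm{Re}(\alpha^2)$ by conjugation is a clean shortcut to the evenness of $a(p)$, but it does not change the substance of the argument.
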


\begin{proof}
Since $p\equiv 1\pmod{4}$, the prime $p$ splits in $K$ and we decompose it as $p=(x+iy)(x-iy)$ for $x,y\in\mathbb{Z}$. Without loss of generality, we can assume that $x$ is odd and $y$ is even. Therefore, by \Cref{eq:heckechar4} we have
\[
a(p) = (x+iy)^2 + (x-iy)^2 = 2(x^2-y^2) \equiv 0\pmod{2}.
\]
Next, looking at prime powers, by \Cref{0810:tpbeta} we have
\[
a(p^j)= \sum_{r=0}^{\lfloor j/2\rfloor} (-1)^r\binom{j-r}{r}\chi(p)^rp^{2r}a(p)^{j-2r}.
\]
But $p$ is odd, $\chi(p)=1$ and $a(p)$ is even, which implies that all summands are even
except possibly the last one, corresponding to $r=\lfloor j/2\rfloor$.
Distinguishing on the parity of $j$, we deduce that if $j$ is odd,
then $a(p^j)$ is even; whereas if $j$ is even, then $a(p^j)$ is odd.
\end{proof}

%% LEMMA: p = 3 (4) %%
\begin{lemma}\label{1411:lemma1}
If $p$ is a prime such that $p\equiv 3\pmod{4}$, then
\[
\begin{cases}
a(p^j)=0, &\text{if $j$ is odd},\\
a(p^j)\equiv 1\pmod{2}, &\text{if $j$ is even}.
\end{cases}
\]
\end{lemma}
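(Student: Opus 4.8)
The plan is to parallel the proof of \Cref{1411:lemma2}, replacing the splitting of $p$ by the fact that a prime $p\equiv 3\pmod 4$ is \emph{inert} in $K=\QQ(i)$. First I would note that, since $p$ is inert, the ideal $(p)$ is prime of norm $p^2$, so there is no non-zero ideal of $\Ocal_K$ of norm $p$. By \Cref{eq:heckechar4} the coefficient $a(p)$ is then an empty sum, hence $a(p)=0$.

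Next I would substitute this into the closed form \Cref{0810:tpbeta}. Since $p\equiv 3\pmod 4$ we have $\chi(p)=-1$, and with $k=3$ the product $(-1)^r\chi(p)^r$ equals $1$, so that
\[
a(p^j)=\sum_{r=0}^{\lfloor j/2\rfloor}\binom{j-r}{r}p^{2r}a(p)^{j-2r}.
\]
Because $a(p)=0$, the factor $a(p)^{j-2r}$ vanishes unless $j-2r=0$. If $j$ is odd this never occurs, so $a(p^j)=0$; if $j$ is even, only the term $r=j/2$ survives, giving $a(p^j)=\binom{j/2}{j/2}p^{j}=p^{j}$.

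The statement of the lemma follows at once: for odd $j$ we obtain the exact equality $a(p^j)=0$, and for even $j$ the value $a(p^j)=p^j$ is odd since $p$ is an odd prime. I expect no genuine difficulty in this argument; the only step requiring attention is the standard ramification fact that $p\equiv 3\pmod 4$ is inert in $\QQ(i)$, which is precisely what forces $a(p)=0$ and thereby collapses the recursion. One could equally argue geometrically, observing that the only ideals of norm a power of $p$ are the powers $(p)^{j/2}$, which exist only for even $j$ and contribute $c((p^{j/2}))=p^{j}$, but the computation above keeps the exposition closest to that of the preceding lemma.
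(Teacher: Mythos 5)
Your proof is correct, but it takes a different route from the paper for the exponents $j\geq 2$. The paper argues entirely geometrically, exactly as in the alternative you sketch at the end: since $p$ is inert, there are \emph{no} ideals of norm $p^j$ for odd $j$ (giving $a(p^j)=0$ directly, not just $a(p)=0$), and for even $j$ the unique ideal of norm $p^j$ is $p^{j/2}\Ocal_K$, whose character value is $(p^{j/2})^2=p^j$, an odd number. Your argument instead extracts only $a(p)=0$ from inertness and then propagates through the closed form \Cref{0810:tpbeta}, using that $\chi(p)=-1$ makes the signs $(-1)^r\chi(p)^r$ collapse to $1$ and that $a(p)=0$ kills every term except (for even $j$) the one with $r=j/2$, yielding $a(p^j)=p^j$. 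Both routes land on the same exact values. What yours buys is uniformity with the proof of \Cref{1411:lemma2}, at the cost of invoking the Hecke recursion and the convention $a(p)^0=1$ in the surviving term (harmless, since the closed form is shorthand for the recursion $a(p^j)=-\chi(p)p^{k-1}a(p^{j-2})$, which with $a(p)=0$ gives $a(p^j)=p^2a(p^{j-2})$ and the claim by immediate induction). What the paper's route buys is brevity and the exact statement $a(p^j)=0$ for odd $j$ without any appeal to the multiplicative structure of the coefficients beyond the defining sum \Cref{eq:heckechar4}. One small slip of phrasing to fix: the cancellation $(-1)^r\chi(p)^r=1$ has nothing to do with $k=3$; the weight only enters through the factor $p^{(k-1)r}=p^{2r}$.
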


\begin{proof}
Since $p\equiv 3\pmod{4}$, the prime $p$ is inert in $K$ and therefore,
when $j$ is odd, there are no ideals of norm $p^j$, so $a(p^j)=0$ by \Cref{eq:heckechar4}.
When $j$ is even, there is only one ideal with norm $p^{j}$,
namely $p^{j/2}\mathcal{O}_K$, which is generated by $p^{j/2}$.
Applying \Cref{eq:heckechar4} again, we obtain
\[
a(p^j) = (p^{j/2})^2 = p^j \equiv 1 \pmod{2}.
\]
\end{proof}

%% PROOF OF THEOREM FOR b_4 %%
\begin{proof}[Proof of \Cref{thm:b4}]
Let
\[
8n+1 = \prod_{p} p^{\alpha_p}
\]
be the prime factorisation of the odd number $8n+1$.
Hence $p=2$ never occurs.
By multiplicativity of the Fourier coefficients,
see \Cref{eq:mult}, we have
\[
a(8n+1) = \prod_{p} a(p^{\alpha_p}).
\]
By \Cref{1411:lemma2} and \Cref{1411:lemma1}, we see that as soon as
one of the exponents $\alpha_p$ is odd, we have an even factor and $a(8n+1)$ is even.
If instead $\alpha_p$ is even for all primes, then $a(8n+1)$ is odd.
The theorem follows from this and \Cref{eq:b4an}.
\end{proof}

%% SECTION 4 : b_8 %%
\section{Parity of $8$-regular partition function}\label{sec:b8}

In this section we prove \Cref{thm:b8}.
By \cite[Section~2.6]{Ser85}, we have
\begin{equation}\label{0810:eq001}
\eta^{14}(12z) = \frac{1}{720\sqrt{-3}}(\varphi_{K,c_{+}}(z)-\varphi_{K,c_{-}}(z)),
\end{equation}
where $\varphi_{K,c_{\pm}}\in\mathcal S_7(\Gamma_1(144),\chi)$,
with $\chi$ being the Dirichlet character given by
\[
\chi(n)=\begin{cases}
(-1)^{(n-1)/2}, & \text{if }\gcd(n,144)=1,\\
0, & \text{otherwise}.
\end{cases}
\]
The forms $\varphi_{K,c_\pm}$ are the normalised eigenforms
associated to the Hecke characters $c_\pm$ on $K=\QQ(\sqrt{-3})$ of conductor
$\mathfrak{f}=4\sqrt{-3}\Ocal_K$ and defined as follows.
Let $\mathfrak{a}$ be an ideal in $\Ocal_K=\ZZ[\frac{1+\sqrt{-3}}{2}]$
coprime to $\mathfrak{f}$ and let $\alpha\in\Ocal_K$ be the unique generator of $\mathfrak{a}$ such that
\[
\alpha = x + y\sqrt{-3}, \quad x,y\in\ZZ,\quad x+y\equiv 1\pmod{2},\quad x\equiv 1\pmod{3}.
\]
These conditions can be always achieved by multiplication by a unity. Then we set
\[
c_{\pm}(\mathfrak{a}) = (-1)^{(x\mp y-1)/2} \alpha^6.
\]
With this notation, the forms $\varphi_{K,c_{\pm}}$ can be written as
\begin{equation}\label{eq:heckechar8}
\varphi_{K,c_{\pm}}(z)
= \sum_{\mathfrak{a}} c_{\pm}(\mathfrak{a}) q^{\mathrm{Norm}(\mathfrak{a})}.
\end{equation}
The Galois orbit $\{\varphi_{K,c_{+}},\varphi_{K,c_{-}}\}$
is listed with label 144.7.g.d on \cite{lmfdb}.

Next, by \Cref{1411:eq001}, we have
\[
\eta(12z)^{14} \equiv \sum_{n=0}^{+\infty} b_8(n) q^{24n+7} \pmod{2}.
\]
It follows that
\begin{equation}\label{eq:b8an}
b_8(n) \equiv \frac{a_{+}(24n+7)-a_{-}(24n+7)}{720\sqrt{-3}} \pmod{2},
\end{equation}
where $a_{\pm}(n)$ is the $n$-th Fourier coefficient of $\varphi_{K,c_{\pm}}(z)$.
By multiplicativity of the Fourier coefficients, see \Cref{eq:mult},
it suffices to study $a_{\pm}(n)$ on prime powers.

Our strategy is now to study $a_\pm(p^j)$ individually (in particular, their divisibility)
for primes dividing $24n+7$, later obtain the divisibility of $a_\pm(24n+7)$
by multiplicativity of the Fourier coefficients, and finally characterise the divisibility
of the difference $a_{+}(24n+7)-a_{-}(24n+7)$.
Note that, since the coefficients of $\eta$ and its powers are integers,
we must have \emph{a fortiori} that such a difference is an integer multiple of $720\sqrt{-3}$.
In particular, since $2^4||720$, we have that $2|b_8(n)$ if and only if
\[
\frac{a_{+}(24n+7)-a_{-}(24n+7)}{\sqrt{-3}}\equiv 0 \pmod{2^5},
\]
which is what we are going to characterise.

%% LEMMA: p = 5, 11 (mod 12) %%
\begin{lemma}\label{lemma:511}
Let $a_\pm(n)$ be the $n$-th Fourier coefficient of $\varphi_{K,c_{\pm}}$
and let $p$ be a prime such that $p\equiv 5,11\pmod{12}$. Then
\[ 
\begin{cases}
a_{+}(p^j)=a_{-}(p^j)\equiv 1 \pmod{2},  &\text{if $j$ is even}, \\
a_{+}(p^j)=a_{-}(p^j)= 0,  &\text{if $j$ is odd}.
\end{cases}
\]
\end{lemma}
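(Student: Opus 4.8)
The plan is to mirror the argument of \Cref{1411:lemma1}, exploiting the splitting behaviour of $p$ in $K=\QQ(\sqrt{-3})$. First I would observe that the hypothesis $p\equiv 5,11\pmod{12}$ forces $p\equiv 2\pmod 3$, so that $-3$ is a non-residue modulo $p$ and $p$ is inert in $K$. Consequently $p\Ocal_K$ is a prime ideal of norm $p^2$, and by unique factorisation of ideals the only ideals whose norm is a power of $p$ are the powers $p^{m}\Ocal_K$, of norm $p^{2m}$.

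For $j$ odd there is therefore no ideal of norm $p^j$, and \Cref{eq:heckechar8} gives $a_\pm(p^j)=0$ at once. For $j$ even the unique ideal of norm $p^j$ is $\mathfrak a=p^{j/2}\Ocal_K$, and the task reduces to evaluating $c_\pm(\mathfrak a)$. Here I would determine the distinguished generator $\alpha=x+y\sqrt{-3}$: the six generators of $\mathfrak a$ are the unit multiples $u\,p^{j/2}$, but only $u=\pm1$ lie in $\ZZ[\sqrt{-3}]$ (the remaining units introduce genuine half-integers, since $p^{j/2}$ is odd), so $\alpha=\pm p^{j/2}$. The condition $x\equiv1\pmod3$ then pins down the sign according to the parity of $j/2$, while $x+y\equiv1\pmod2$ holds automatically because $\pm p^{j/2}$ is odd.

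The crucial point---and the step I expect to be the real content---is that this distinguished generator is a rational integer, i.e.\ $y=0$. Since the only place where the two characters differ is the twist $(-1)^{(x\mp y-1)/2}$, the vanishing of $y$ makes the two signs coincide, so $c_+(\mathfrak a)=c_-(\mathfrak a)=(-1)^{(x-1)/2}\alpha^6=(-1)^{(x-1)/2}p^{3j}$. This yields the equality $a_+(p^j)=a_-(p^j)$ claimed for even $j$. Finally, as $p$ is odd the integer $p^{3j}$ is odd, whence $a_\pm(p^j)=\pm p^{3j}\equiv1\pmod2$, completing both cases. The only subtlety to handle with care is the bookkeeping that identifies the correct unit and confirms $y=0$; everything else is a direct application of \Cref{eq:heckechar8}.
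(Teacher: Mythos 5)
Your proof is correct and follows essentially the same route as the paper: inertness of $p$ in $\QQ(\sqrt{-3})$ kills the odd-$j$ case, and for even $j$ the unique ideal $p^{j/2}\Ocal_K$ has distinguished generator $\pm p^{j/2}$, giving $a_\pm(p^j)\equiv p^{3j}\equiv 1\pmod 2$. Your extra care in ruling out the non-rational unit multiples and in noting that $y=0$ forces the two sign twists to agree is exactly the (implicit) content of the paper's shorter argument.
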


\begin{proof}
Since $p\equiv 2\pmod{3}$, $p$ is inert in $\Ocal_K$.
If $j$ is odd, there are no ideals of norm~$p^j$ and
therefore $a_{\pm}(p^j)=0$ by \Cref{eq:heckechar8}.
If $j$ is even, the only ideal of norm~$p^j$ is $p^{j/2}\Ocal_K$,
with generator $\pm p^{j/2}$, where the sign is chosen to have
the desired congruence modulo~3. By \Cref{eq:heckechar8}, this gives
$a_{+}(p^j)=a_{-}(p^j) \equiv p^{3j} \equiv 1\pmod{2}$, as claimed.
\end{proof}

%% LEMMA: p = 1 (mod 12) %%
\begin{lemma}\label{lemma:1mod12}
Let $a_\pm(n)$ be the $n$-th Fourier coefficient of $\varphi_{K,c_{\pm}}$
and let $p$ be a prime such that $p\equiv 1\pmod{12}$. Then
\[
a_{+}(p^j) = a_{-}(p^j) \equiv
\begin{cases}
1 \pmod{2}, & \text{if $j$ is even},\\
0 \pmod{2}, & \text{if $j$ is odd}.
\end{cases}
\]
\end{lemma}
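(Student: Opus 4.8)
The plan is to follow the same template as the proof of \Cref{1411:lemma2}, now accounting for the two twisted Hecke characters $c_\pm$. Since $p\equiv 1\pmod{12}$, in particular $p\equiv 1\pmod 3$, so $p$ splits in $\Ocal_K$ and the two primes above $p$ are generated by the normalised $\alpha=x+y\sqrt{-3}$ and its conjugate $\bar\alpha=x-y\sqrt{-3}$. One checks that $\bar\alpha$ is again normalised, since the conditions $x+y\equiv 1\pmod 2$ and $x\equiv 1\pmod 3$ are insensitive to the sign of $y$. The first thing I would establish is a parity constraint on $x,y$: writing $p=x^2+3y^2$ and using $p\equiv 1\pmod 4$, a short case check on residues modulo $4$ forces $x$ to be odd and $y$ to be even.

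With $y$ even in hand, the two sign factors $(-1)^{(x\mp y-1)/2}$ attached to $\alpha$ and to $\bar\alpha$ all coincide, since their exponents differ by $y$. A direct comparison of \Cref{eq:heckechar8} for $c_+$ and for $c_-$, summing over the two primes above $p$, then gives $a_+(p)=a_-(p)=\epsilon(\alpha^6+\bar\alpha^6)$ for a common sign $\epsilon\in\{\pm 1\}$. This is the crucial point where the split case $p\equiv 1\pmod{12}$ differs from the inert case of \Cref{lemma:511}: the nontrivial sign twist separating $c_+$ from $c_-$ must be shown to act identically on both forms. Once the exact equality $a_+(p)=a_-(p)$ is known, feeding it into the recursion on prime powers recalled in \Cref{sec:modform} (with the same character $\chi$) and inducting on $j$ yields $a_+(p^j)=a_-(p^j)$ for all $j$, so it remains only to determine the parity of the common value $a(p^j)$.

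Next I would show that $a(p)$ is even. Since $\alpha-\bar\alpha=2y\sqrt{-3}\in 2\Ocal_K$, we have $\alpha\equiv\bar\alpha\pmod{2\Ocal_K}$, hence $\alpha^6+\bar\alpha^6\equiv 2\bar\alpha^6\equiv 0\pmod{2\Ocal_K}$; as $\alpha^6+\bar\alpha^6=\mathrm{Tr}_{K/\QQ}(\alpha^6)$ is a rational integer, it lies in $2\ZZ$, so $a(p)$ is even. I would also record that $\chi(p)=(-1)^{(p-1)/2}=1$, because $p\equiv 1\pmod 4$. With these two facts the argument closes exactly as in \Cref{1411:lemma2}: substituting $k=7$, $\chi(p)=1$, and $a(p)$ even into the closed form \Cref{0810:tpbeta}, every summand with $j-2r>0$ vanishes modulo $2$, and the only possible survivor is the term $r=\lfloor j/2\rfloor$. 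This term is present precisely when $j$ is even, in which case it equals $(-1)^{j/2}p^{3j}\equiv 1\pmod 2$; for $j$ odd no such term exists and $a(p^j)$ is even.

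The main obstacle I anticipate is the bookkeeping in the second paragraph: verifying that the sign twist distinguishing $c_+$ from $c_-$ collapses once $y$ is even, so that $a_+(p)$ and $a_-(p)$ coincide \emph{exactly} rather than merely modulo~$2$ (the latter would not suffice, since the subsequent induction and the closed form \Cref{0810:tpbeta} are exact identities). The residue computation forcing $y$ even is the key enabling observation; after that, both the integrality and evenness of $a(p)$ and the final extraction of parity from \Cref{0810:tpbeta} are routine and mirror the $4$-regular case.
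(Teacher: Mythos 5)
Your proposal is correct and follows essentially the same route as the paper's proof: split $p=x^2+3y^2$ with $x$ odd and $y$ even, observe that the sign twists of $c_+$ and $c_-$ coincide on the two primes above $p$ (because the exponents differ by the even integer $y$), conclude $a_+(p)=a_-(p)$ exactly, and extract the parity of $a_\pm(p^j)$ from the closed form \Cref{0810:tpbeta} using $\chi(p)=1$ and $2\mid a(p)$. The only (harmless) differences are that you prove the evenness of $a(p)$ via the congruence $\alpha\equiv\bar\alpha\pmod{2\Ocal_K}$ rather than the paper's explicit expansion of $\alpha^6+\bar\alpha^6$, and you work modulo $2$ throughout where the paper tracks the finer (but here unneeded) information modulo $4$.
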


\begin{proof}
Write $p=z^2+3w^2$, with $z,w\in\ZZ$. Reducing modulo 4, we deduce that
$z$ is odd and $w$ is even, so $z+w\equiv z-w\pmod{4}$. Therefore, we have $(-1)^{(z+w-1)/2}=(-1)^{(z-w-1)/2}$. There are two ideals above $p$, namely $(z+w\sqrt{-3})$ and $(z-w\sqrt{-3})$. Applying \Cref{eq:heckechar8}, we obtain
\[
a_{+}(p)=a_{-}(p)=
\begin{cases}
(z+w\sqrt{-3})^6 + (z-w\sqrt{-3})^6, & \text{if }z\pm w\equiv 1\pmod{4},\\
-(z+w\sqrt{-3})^6 - (z-w\sqrt{-3})^6, & \text{if }z\pm w\equiv 3\pmod{4}.\\
\end{cases}
\]
By \Cref{0810:tpbeta}, it follows that $a_{+}(p^j)=a_{-}(p^j)$ for all $j\in\ZZ_{\ge 0}$. Moreover, expanding the powers and reducing modulo 4, we obtain
\[
a_{\pm}(p) = \pm(2z^6 - 90z^4w^2 + 270z^2w^4 - 54w^6) \equiv 2 \pmod{4}.
\]
We also have, by \Cref{0810:tpbeta},
\[
a_\pm(p^j) \equiv \sum_{r=0}^{\lfloor j/2\rfloor} (-1)^r\binom{j-r}{r}2^{j-2r}
\equiv
\begin{cases}
(-1)^{j/2} \pmod{4}, & \text{if $j$ is even},\\
2 \pmod{4}, & \text{if $j\equiv 1\pmod{4}$},\\
0 \pmod{4}, & \text{if $j\equiv 3\pmod{4}$},
\end{cases}
\]
which implies the claim.
\end{proof}

%% LEMMA: p = 7 (mod 12) %%
\begin{lemma}\label{lemma:7mod12}
Let $a_\pm(n)$ be the $n$-th Fourier coefficient of $\varphi_{K,c_{\pm}}$
and let $p$ be a prime such that $p\equiv 7\pmod{12}$. If $j$ is even, then
\[
a_{+}(p^j) = a_{-}(p^j) \equiv (-1)^{j/2}\pmod{2^4}.
\]
If $j$ is odd, then
\[
a_{+}(p^j) = -a_{-}(p^j) = t\sqrt{-3},
\]
with
\[
t\equiv
\begin{cases}
2^3\pmod{2^4}, & \text{if $j\equiv 1\pmod{4}$},\\
0\pmod{2^4}, & \text{if $j\equiv 3\pmod{4}$}.
\end{cases}
\]
\end{lemma}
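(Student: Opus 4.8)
The plan is to follow the same template as \Cref{lemma:511}--\Cref{lemma:1mod12}: first pin down $a_\pm(p)$ directly from the Hecke character, then propagate to $a_\pm(p^j)$ through the closed form \Cref{0810:tpbeta}, reducing everything modulo $2^4$. Since $p\equiv 7\pmod{12}$ forces $p\equiv 1\pmod 3$, the prime $p$ splits in $K$, and I would write $p=z^2+3w^2$ with $z,w\in\ZZ$. Reducing this identity modulo $4$ and using $p\equiv 3\pmod 4$ forces $z$ even and $w$ odd, which is the opposite parity to the case $p\equiv 1\pmod{12}$ of \Cref{lemma:1mod12}. The two primes above $p$ are then $(z\pm w\sqrt{-3})$, and among their associates only $\pm(z\pm w\sqrt{-3})$ lie in $\ZZ[\sqrt{-3}]$, so the canonical generator demanded by \Cref{eq:heckechar8} is one of these, selected by $x\equiv 1\pmod 3$; the parity condition $x+y\equiv1\pmod2$ is then automatic because $z+w$ is odd.

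The structural point driving everything is that, with $z$ even and $w$ odd, the integers $z+w$ and $z-w$ are both odd and differ by $2w\equiv 2\pmod 4$; hence the two sign exponents $(x\mp y-1)/2$ attached to the two primes have opposite parities. Feeding this into \Cref{eq:heckechar8}, and using that $\alpha^6$ is insensitive to the sign of $\alpha$, the two contributions to $a_+(p)$ carry opposite signs while those to $a_-(p)$ carry the reversed signs, so that $a_+(p)=-a_-(p)$ and this common value is a unit times $(z+w\sqrt{-3})^6-(z-w\sqrt{-3})^6$. Expanding the binomial, the even powers of $\sqrt{-3}$ cancel and I obtain
\[
(z+w\sqrt{-3})^6-(z-w\sqrt{-3})^6 = 12\,zw\sqrt{-3}\,(z^4-10z^2w^2+9w^4),
\]
so that $a_+(p)=t\sqrt{-3}$ with $t=\pm12\,zw(z^4-10z^2w^2+9w^4)$. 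A short reduction modulo $2^4$, using that $z$ is even and $w$ is odd, then pins down $t$ and settles the base case $j=1$ (which lies in the class $j\equiv1\pmod4$).

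For general $j$ I would invoke \Cref{0810:tpbeta}. Since $p\equiv 3\pmod 4$ we have $\chi(p)=-1$, so $(-1)^r\chi(p)^r=1$ and the closed form collapses to $a(p^j)=\sum_{r}\binom{j-r}{r}p^{6r}a(p)^{j-2r}$. Because $a(p)=t\sqrt{-3}$ is a pure multiple of $\sqrt{-3}$, every summand is rational when $j$ is even and a multiple of $\sqrt{-3}$ when $j$ is odd, which produces the two shapes in the statement; replacing $a(p)$ by $-a(p)$ also yields at once $a_-(p^j)=(-1)^j a_+(p^j)$, i.e.\ $a_+=a_-$ for even $j$ and $a_+=-a_-$ for odd $j$. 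The $2$-adic input is that $2^3\mid t$, so $a(p)^{j-2r}$ is divisible by $2^{3(j-2r)}$ and every term with $j-2r\ge 2$ vanishes modulo $2^4$. For even $j$ only the term $j-2r=0$ survives, equal to $p^{3j}$, whose reduction modulo $2^4$ gives the stated constant; for odd $j$ only the term $j-2r=1$ survives, equal to $\tfrac{j+1}{2}\,p^{3(j-1)}a(p)$, so that $a_+(p^j)/\sqrt{-3}\equiv \tfrac{j+1}{2}\,p^{3(j-1)}t\pmod{2^4}$, whose value is governed by the parity of $\tfrac{j+1}{2}$ and hence produces exactly the dichotomy between $j\equiv1$ and $j\equiv3\pmod 4$.

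I expect the main obstacle to be the $2$-adic bookkeeping in this last step: one must verify that the truncation of the binomial sum to a single surviving term is legitimate modulo the full power $2^4$ (not merely a smaller power), keep precise track of the $2$-adic valuation of $t$ coming from the base case, and confirm that $p^{3j}$ and $\tfrac{j+1}{2}p^{3(j-1)}t$ reduce to the asserted residues. A secondary, more clerical difficulty is to carry the choice of canonical generator and the induced sign $(-1)^{(x\mp y-1)/2}$ uniformly across the two primes above $p$, so that the clean identity $a_+(p^j)=\pm a_-(p^j)$ is seen to hold for every $j$ and not only in the base case.
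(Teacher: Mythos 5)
Your strategy coincides with the paper's own proof: the same representation $p=z^2+3w^2$ with $z$ even and $w$ odd, the same sign analysis giving $a_{+}(p)=-a_{-}(p)=\pm\bigl((z+w\sqrt{-3})^6-(z-w\sqrt{-3})^6\bigr)$, and the same truncation of \Cref{0810:tpbeta} modulo $2^4$. But the two computations you defer as ``short reductions'' are exactly where the argument breaks, and they cannot be repaired, because the lemma as stated is false. For the base case, write $z=2z'$; your identity gives $t=\pm 12zw(z^4-10z^2w^2+9w^4)\equiv 8z'\pmod{2^4}$, since $24\equiv 8\pmod{2^4}$ and the remaining factor is odd. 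Hence $t\equiv 2^3\pmod{2^4}$ holds if and only if $z'$ is odd, i.e.\ $z\equiv 2\pmod 4$, i.e.\ $p\equiv 7\pmod 8$. The hypothesis $p\equiv 7\pmod{12}$ does not ensure this: it also allows $p\equiv 19\pmod{24}$, where $z\equiv 0\pmod 4$ and $t\equiv 0\pmod{2^4}$. Concretely, $p=19=4^2+3\cdot 1^2$ gives $a_{\pm}(19)=\mp 5040\sqrt{-3}$ (via \Cref{0810:eq001} this matches the coefficient $-14$ of $q^{19}$ in $\eta(12z)^{14}$), and $5040=2^4\cdot 315\equiv 0\pmod{2^4}$, not $2^3$.

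The even-$j$ claim fails as well, and here your computation is actually the more careful one: since $\chi(p)=-1$, the closed form has $(-1)^r\chi(p)^r=1$ (the paper keeps $(-1)^r$ but drops $\chi(p)^r$, a sign slip which is where its $(-1)^{j/2}$ comes from), so the surviving term is $+p^{3j}$. But $p^{3j}$ is an odd square for even $j$, hence $\equiv 1$ or $9\pmod{2^4}$, never $\equiv -1$; so it cannot ``reduce to the asserted residue'' $(-1)^{j/2}$ when $j\equiv 2\pmod 4$. For instance, $a_{\pm}(7^2)=(1+4\sqrt{-3})^6+(1-4\sqrt{-3})^6-7^6=-271151\equiv 1\pmod{2^4}$, whereas the lemma asserts $-1$. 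In short, your proposal fails at precisely the steps you yourself flagged as the main obstacles, but through no fault of the strategy: the paper's own proof asserts the same false reductions. A correct statement would assume $p\equiv 7\pmod{24}$ for the odd-$j$ dichotomy and would replace $(-1)^{j/2}$ by $p^{3j}$ (all that matters is that it is odd). The main theorem survives these corrections, because its proof uses only the oddness of the even-$j$ values, and invokes the $j\equiv 1\pmod 4$ congruence only when $24n+7=p^{4a+1}m^2$, a factorisation that forces $p\equiv 7\pmod{24}$.
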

\begin{proof}
Write $p=z^2+3w^2$, with $z$ even and $w$ odd.
Up to replacing $w$ with $-w$, we assume that $z+w\equiv 1\pmod{4}$
and, hence, $z-w\equiv 3\pmod{4}$.
Applying \Cref{eq:heckechar8}, we obtain
\[
a_{+}(p) = -(z+w\sqrt{-3})^6 + (z-w\sqrt{-3})^6 = -a_{-}(p).
\]
Expanding the powers, we deduce that
\[
\frac{a_{+}(p)}{\sqrt{-3}} = -12z^5w + 120z^3w^3 - 108zw^5 \equiv -108zw^5 \equiv 2^3 \pmod{2^4},
\]
If $j$ is odd, by \Cref{0810:tpbeta} we obtain $a_{+}(p^j) = -a_{-}(p^j)$ and
\[
\frac{a_{+}(p^j)}{\sqrt{-3}} = \frac{1}{\sqrt{-3}}
\sum_{r=0}^{\lfloor j/2\rfloor} (-1)^r\binom{j-r}{r} p^{6r} a_{+}(p)^{j-2r} \equiv (j+1) 2^2 \pmod{2^4}.
\]
If $j$ is even, by \Cref{0810:tpbeta}, we have $a_{+}(p^j) = a_{-}(p^j)$ and
\[
a_{+}(p^j) = \sum_{r=0}^{\lfloor j/2\rfloor} (-1)^r\binom{j-r}{r} p^{6r} a_{+}(p)^{j-2r} \equiv (-1)^{j/2}p^{3j} \equiv (-1)^{j/2} \pmod{2^4}.
\]
\end{proof}

%% PROOF OF THEOREM FOR b_8 %%
\begin{proof}[Proof of \Cref{thm:b8}]
Let
\[
24n+7 = \prod_{p} p^{\alpha_p}
\]
be the prime factorisation of the odd number $24n+7$. Hence $p=2$ never occurs.
Since the Fourier coefficients are multiplicative, see \Cref{eq:mult}, we have
\begin{equation}\label{0810:eq002}
a_\pm(24n+7) = \prod_p a_\pm(p^{\alpha_p}).
\end{equation}
By Lemmas \ref{lemma:511}, \ref{lemma:1mod12} and \ref{lemma:7mod12},
$a_{+}(p^j)=a_{-}(p^j)$ for every $j\in\ZZ_{\ge 0}$
and every prime $p\equiv 1,5,11\pmod{12}$
or when $j$ is even and $p\equiv 7\pmod{12}$.
On the other hand, by \Cref{lemma:7mod12}, we have
$a_{+}(p^j)=-a_{-}(p^j)$ when $j$ is odd and $p\equiv 7\pmod{12}$.
Therefore, we can write
\begin{equation}\label{0810:eq003}
a_{+}(24n+7) - a_{-}(24n+7) = (1-(-1)^\gamma)\prod_{p} a_{+}(p^{\alpha_p}),
\end{equation}
with
\[
\gamma = \sum_{\substack{p\equiv 7(12) \\ \alpha_p \text{ odd}}} \alpha_p.
\]
If $\gamma$ is even, then \Cref{0810:eq003} vanishes and $b_8(n)$ is even by \Cref{eq:b8an}.

Assume that $\gamma$ is odd, i.e., there is an odd number of primes
$p\equiv 7\pmod{12}$ appearing with an odd power.
By \Cref{lemma:7mod12}, each of such powers is of the form $t\sqrt{-3}$, with $2^3|t$.
It follows that if there are at least three of such powers we have
\begin{equation}\label{0810:eq004}
\frac{a_{+}(24n+7) - a_{-}(24n+7)}{\sqrt{-3}}\equiv 0 \pmod{2^5}
\end{equation}
and $b_8(n)$ is even by \Cref{eq:b8an}.

Assume thus that there is exactly one odd power $p^{\alpha_p}$ with $p\equiv 7\pmod{12}$.
Since the first factor in \Cref{0810:eq003} equals 2, by \Cref{lemma:7mod12} we deduce that
\Cref{0810:eq004} holds if $\alpha_p\equiv 3\pmod{4}$.

We are left with the case that exactly one prime $p\equiv 7\pmod{12}$ divides $24n+7$
with $\alpha_p\equiv 1\pmod{4}$.
In this case, by \Cref{lemma:7mod12} we have
\begin{equation}\label{0810:eq005}
\frac{a_{+}(24n+7) - a_{-}(24n+7)}{\sqrt{-3}}\equiv 2^4\prod_{p\not\equiv7(12)} a_{+}(p^{\alpha_p})\pmod{2^5}.
\end{equation}
We observe that, by \Cref{lemma:511} and \Cref{lemma:1mod12},
the last product in \Cref{0810:eq005} is even if there is a prime $p\not\equiv 7\pmod{12}$
appearing with odd power, and is odd otherwise.
This concludes the proof of the theorem by \Cref{eq:b8an}.
\end{proof}

%% BIBLIOGRAPHY %%


\begin{thebibliography}{999999}

\bibitem[Abi22]{Abi22}
S.~Abinash,
\emph{On $3$-divisibility of $9$- and $27$-regular partitions},
Ramanujan J.~\textbf{57} (2022), 1193--1207.

\bibitem[BBG87]{BBG87}
R.~Blecksmith, J.~Brillhart, and I.~Gerst,
\emph{Parity results for certain partition functions and identities similar to theta function identities},
Math.~Comp.~\textbf{48} (1987), no.~177, 29--38.

\bibitem[BL22]{BL22}
I.A.~Broudy and K.~Lovejoy,
\emph{Arithmetic properties of Schur-type overpartitions},
Involve \textbf{15} (2022), 489--505.

\bibitem[DP09]{DP09}
B.~Dandurand and D.~Penniston,
\emph{{$\ell$}-divisibility of {$\ell$}-regular partition functions},
Ramanujan J.~\textbf{19} (2009), no.~1, 63--70.

\bibitem[DS05]{DS05}
F.~Diamond and J.~Shurman,
\emph{A first course in modular forms},
Graduate Texts in Mathematics, vol.~228, Springer-Verlag, New York, 2005.

\bibitem[{LMF}]{lmfdb}
The {LMFDB Collaboration},
\emph{The {L}-functions and modular forms database},
\url{http://www.lmfdb.org}.

\bibitem[LP01]{LP01}
J.~Lovejoy and D.~Penniston,
\emph{{$3$}-regular partitions and a modular {$K3$} surface},
{$q$}-series with applications to combinatorics, number theory, and physics ({U}rbana, {IL}, 2000),
Contemp.~Math., vol.~291, Amer.~Math.~Soc., Providence, RI, 2001, pp.~177--182.

\bibitem[OP00]{OP00}
K.~Ono and D.~Penniston,
\emph{The 2-adic behavior of the number of partitions into distinct parts},
J.~Combin.~Theory Ser.~A \textbf{92} (2000), no.~2, 138--157.

\bibitem[Rib77]{Rib77}
K.A.~Ribet,
\emph{Galois representations attached to eigenforms with nebentypus},
Modular Functions of one Variable V (Berlin, Heidelberg)
(J.-P.~Serre and D.B.~Zagier, eds.), Springer Berlin Heidelberg, 1977, pp.~18--52.

\bibitem[Ser85]{Ser85}
J.-P.~Serre,
\emph{Sur la lacunarité des puissances de $\eta$},
Glasg.~Math.~J.~\textbf{27} (1985), 203--221.

\end{thebibliography}
\end{document}